\journal{Journal of \LaTeX\ Templates}
\nonstopmode \numberwithin{equation}{section}
\newtheorem{theorem}{Theorem}[section]
\newtheorem{lemma}[theorem]{Lemma}
\newtheorem{remark}[theorem]{Remark}
\newtheorem{definition}[theorem]{Definition}
\newcommand{\norm}[1]{\left\|#1\right\|}
\newtcolorbox[auto counter,number within=chapter]{prb}{%
	colback=green!10!white,colframe=green!20!black,fonttitle=\bfseries}
\begin{document}
	
	\begin{frontmatter}
		
		\title{Existence of Solutions of Nonconvex Multivalued Navier Stokes Equations }
		
		%% Group authors per affiliation:
		\author{Bholanath Kumbhakar\fnref{myfootnote}}
		\address{Department of Mathematics, Indian Institute of Technology Roorkee}
		\fntext[myfootnote]{Email id: bkumbhakar@mt.iitr.ac.in}
		
		%% or include affiliations in footnotes:
		%\author[mymainaddress]{Dwijendra Narain Pandey}
		%\ead[url]{www.elsevier.com}
		
		\author{Dwijendra Narain Pandey\corref{mycorrespondingauthor}}
		\cortext[mycorrespondingauthor]{Corresponding author}
		\address{Department of Mathematics, Indian Institute of Technology Roorkee}
		\ead{dwij@ma.iitr.ac.in}
		
		%\address[mymainaddress]{1600 John F Kennedy Boulevard, Philadelphia}
		%\address[mysecondaryaddress]{360 Park Avenue South, New York}
		
		\begin{abstract}
In this paper, we discuss the existence of local strong solutions for the multivalued version of three-dimensional nonstationary Navier-Stokes equation in Banach spaces. Also, we considered a more general inclusion problem and studied the existence of solutions using the fixed point technique approach. We assume that the multivalued map possesses closed values (not necessarily convex values) and apply the Schauder Fixed Point Theorem in order to deduce the existence of fixed points. 
		\end{abstract}
		
		\begin{keyword}
			Nonlinear evolution equation\sep Differential Inclusions\sep Navier-Stokes equation\sep Convexity 
			\MSC[2020] 47J35 \sep
34G25 \sep
35R70 \sep
35Q35 
		\end{keyword}
		
	\end{frontmatter}
\section{Introduction}
 Let $D\subset \mathbb{R}^3$ be a bounded, connected, and open set whose boundary $\partial D$ is smooth. Assume $a>0$ is a given real number and set $Q=D\times (0,a), \Sigma=\partial D\times (0,a)$.
 We consider the three-dimensional Navier Stokes equation involving multivalued maps 
 \begin{equation}\label{Navier Stokes}
    \partial_t u-\frac{1}{\operatorname{Re}}\Delta u+(u\cdot \nabla)u+\nabla p\in F(t,u),~\text{in}~D\times (0,a)
 \end{equation}
 \begin{equation}\label{Div Condition}
      \nabla\cdot u=0,~\text{in}~D\times (0,a)
 \end{equation}
 \begin{equation}\label{BC}
       u=0,~\text{on}~\partial D\times(0,a)
 \end{equation}
 \begin{equation}\label{IC}
       u(\cdot,0)=u_0,~\text{in}~D,
 \end{equation}
 where, $u:D\times [0,a]\to \mathbb{R}^3$ represents the unknown velocity field, $p:D\times [0,a]\to \mathbb{R}$ denotes unknown pressure field, $\operatorname{Re}$ is the Reynolds number, describing the relation between the inertial and viscous forces in the fluid. The map $F$ is a given multivalued nonlinearity that is known to some degree of accuracy. The Navier-Stokes Equations (NSE) describe the time evolution of the velocity and
pressure of a viscous incompressible fluid.

In this paper, we establish the existence of a local strong solution to the problem \eqref{Navier Stokes}-\eqref{IC}.

Next, we consider a more general problem, as given below, and obtain strong solutions.

Let $a>0$, $X, Z$ be real, reflexive, and separable Banach spaces, and $Y$ be a real Banach space. Also, we assume the continuous embeddings
\begin{equation}
    X\hookrightarrow Y \hookrightarrow Z,
\end{equation}
and the embedding $X\hookrightarrow Z$ is compact. Further, let $b\in (0,a], ~q\in (1,\infty)$ and let
\begin{equation}
    X(b)=\{u\in L^q([0,b],X): u^{\prime}~\text{exists and}~u^{\prime}\in L^q([0,b],Z)\}, 
\end{equation}
equipped with the norm
\begin{equation}
\norm{u}_{X(b)}=\left(\norm{u}^q_{L^q([0,b],X)}+\norm{u^{\prime}}^q_{L^q([0,b],Z)}\right)^{\frac{1}{q}}.
\end{equation}
Here, $u^{\prime}$ stands for the derivative of $u$ in the distributional sense.
Also, assume the operators $A(t), B(t): X\to Z$ are single-valued and the map $F:[0,b]\times Z\multimap Z$ has closed values (not necessarily convex values). Then we consider the problem:\\
\textbf{Problem:} find $u\in X(b)$ such that
\begin{equation}\label{Inclusion Problem 1}
    \begin{cases}
        u^{\prime}(t)+A(t)u(t)+B(t)u(t)\in F(t,u(t)),~\text{in}~Z,~t\in (0,b).\\
        u(0)=u_0~\text{in}~Z.
    \end{cases}
\end{equation}
The authors in \cite{eikmeier2023existence} studied the problem \eqref{Inclusion Problem 1} under the assumption that the multimap $F$ has a convex value. In this paper, we remove this convexity assumption and obtain strong solutions. 

 For the existence of weak solutions (even for all dimensions) for single-valued Navier-Stokes problems \eqref{Navier Stokes}-\eqref{IC}, we have a quite complete answer: there exists a global weak solution for every general data: for instance, $u_0\in L^2(D)$ and $f\in L^2(D\times (0, a))$. The existence of weak solutions for the (nonstationary) Navier–Stokes equations was first established in the seminal work of Leray \cite{MR1555394} in 1934. Since then, numerous researchers have explored different formulations of the Navier–Stokes equations, including the contributions of Hopf \cite{MR50423}, Ladyzhenskaya \cite{MR108963, MR254401}, and Lions \cite{MR259693}, among others. More generalized models, such as the Oldroyd model, have been investigated in the presence of a set-valued right-hand side. Additionally, topological degree theory has been introduced to study such problems, as seen in the works of Gori et al. \cite{MR2305430}, Obukhovskii, Zecca, and Zvyagin \cite{MR2078195}, and Zvyagin and Kuzmin \cite{MR2336444}.

 In the single-valued case, the existence of strong (or regular) solutions to the three-dimensional nonstationary Navier–Stokes equations has been examined in works such as Ladyzhenskaya \cite[Section 6.4]{MR254401}, Lions \cite[Section 1.6]{MR259693}, Shinbrot and Kaniel \cite{MR192215}, and Solonnikov \cite{MR172014}. For the Oldroyd model, strong solutions to the corresponding multivalued problem were explored in \cite{MR4478379}, where it was assumed that the multivalued map has convex values and the closed graph property. The specific framework they considered in this section was investigated by Giga and Sohr \cite{MR1138838} for the nonstationary Stokes system and by Fernández-Cara, Guillén, and Ortega \cite{MR1893419} for the more general Oldroyd model for viscoelastic fluids, which includes the Navier–Stokes equations as a special case. However, the convexity condition on the multimap is quite restrictive and not commonly satisfied. Moreover, the existence of strong solutions for the three-dimensional Navier–Stokes multivalued problem has not been addressed when the multivalued nonlinearity lacks convex values. Based on this observation, in this paper, we derive the existence of strong solutions for the problem \eqref{Navier Stokes} without the convexity condition on the multimap $F$. 

 Differential inclusions, which emerged as a natural generalization of the concept of ordinary differential equations, have permeated various areas of science due to their numerous applications and constitute a branch of the general theory of differential equations. A comprehensive review of the differential inclusion theory may be found in \cite{MR1485775}.

We emphasized that convexity plays a vital role in the theory of differential inclusion as most of the fixed point theorems are required to provide existence results, and the fixed point map needs to have the condition that its values are convex. In the statement of Michael's Selection Theorem \cite[Theorem 3$^{\prime \prime}$]{MR77107}, the convexity of the set-valued map is also necessary to obtain the existence of continuous selection. Moreover, the convexity of the set-valued map plays a vital role in determining Caratheodory selections \cite[Theorem 2.74, Chapter II]{MR1485775} of a multimap. In summary, the use of convexity makes it comprehend the existence, uniqueness, stability, and optimization properties of solutions of differential inclusion. Therefore, without convexity, the above-mentioned properties of solutions of differential inclusion are quite complicated. 

However, solutions of differential inclusion with nonconvex-valued multimap have been extensively studied in recent years because it has practical applications in optimization theory and optimal control theory \cite{MR3974775}. Moreover, nonconvexity arises in several real-world problems, such as problems arising in pigeon population models \cite{MR3638196},  population models with hysteresis \cite{MR4218398}, carbonation models \cite{MR3894328} etc. In conclusion, the lack of convexity adds complexity to the studied differential inclusions yet enhances their practical applicability, serving as the core motivation for this research endeavor. For more details about differential inclusions, we mention the works \cite{MR2532526},     \cite{MR2345966}, \cite{MR47317} and references cited therein.\\
The paper is organized as follows:
\begin{itemize}
    \item Section 1 is the introduction, where we give the problem statement and related literature review.
    \item In Section 2, we revisit the preliminary results used in this paper.
    \item In Section 3, we derive the existence result for the three-dimensional Navier Stokes equation.
    \item In Section 4, we generalized the result proved in Section 3 to a more general class of problems.
\end{itemize}
 \section{Preliminaries}
 Assume that $D\subset \mathbb{R}^3$ is a bounded domain, that is a bounded, open, connected set with smooth boundary $\partial D$. Then for a differentiable function $u:D\to \mathbb{R}$, the gradient of \(u\) at a point \(x\in D\) is given by 
 \begin{equation}
     \nabla u(x)=\left(\frac{\partial u}{\partial x_i}(x)\right)_{i=1}^{3}.
 \end{equation}
 In the above, $\frac{\partial}{\partial x_i}, ~i=1,2,3$ denote the partial derivatives of the function $u$. If we consider $u:D\to \mathbb{R}^3$ then we define the gradient of $u$ as
 \begin{equation}
     \nabla u(x)=\left(\frac{\partial u_j}{\partial x_i}(x)\right)_{i,j=1}^{3}.
 \end{equation}
 Further, we define the divergence of $u$ at the point $x\in D$ as follows:
 \begin{equation}
     \nabla\cdot u(x)=\sum_{i=1}^{3}\frac{\partial u_i(x)}{\partial x_i}.
 \end{equation}
To study the existence of strong solutions, we first introduce some functional spaces. Following the book \cite{MR1893419}, for $1<p,q<\infty$ we introduce the spaces
\begin{equation}
    H_p=\{v\in L^p(D)^3: \nabla\cdot v=0, ~v\cdot n=0~\text{on}~\partial D\},
\end{equation}
\begin{equation}
    V_p=H_p\cap W_0^{1,p}(D)^3.
\end{equation}
Here, $n=n(x)$ is a unit vector, normal to $\partial D$ at $x$ and oriented towards the exterior of $D$. Also, the condition $\nabla\cdot v=0$ is understood in the distributional sense. Endowed with the norm of $L^p(D)^3$ (respectively $W^{1,p}(D)^3$), $H_p$ (respectively $V_p$) is a reflexive Banach space. 

We now introduce the Helmholtz projector $P_p: L^p(D)^3\mapsto H_p$. It is a bounded linear operator characterized by the equality $P_pv=v_0$, where $v_0$ is given by the so called Helmholtz decomposition $v=v_0+\nabla  \phi$, with $v_0\in H_p$ and $\phi\in W^{1,p}(D)$. See \cite[Chapter III.1]{MR2808162} for more details). The Stokes operator $A_p:D(A_p)\to H_p$ is defined on $D(A_p)=W^{2,p}(D)^3\cap V_p$ and given by $A_pv=P_p(-\Delta v), ~\text{for all}~v\in D(A_p)$. $D(A_p)$ is a Banach space for the norm 
\begin{equation}
    \norm{v}_{D(A_p)}=\norm{v}_{H_p}+\norm{A_pv}_{H_p}.
\end{equation}
Of course, in $D(A_p)$ this norm is equivalent to $\norm{A_pv}_{H_p}$ and also to the usual norm in the Sobolev space $W^{2,p}(D)^3$ (because of the smoothness of $\partial D$). The operator $-A_p$ is the generator of a bounded analytic semigroup of class $C_0$, $\{e^{-tA_p}: t\ge 0\}$ in $H_p$. 
We now introduce the space
\begin{equation}
    D_p^q=\left\{u\in H_p: \int_{0}^{\infty}\norm{A_pe^{-tA_p}u}^q_{H_p}dt<\infty\right\},
\end{equation}
which is equipped with the norm
\begin{equation}
    \norm{u}_{D_p^q}=\norm{u}_{H_p}+\left( \int_{0}^{\infty}\norm{A_pe^{-tA_p}u}^q_{H_p}dt<\infty\right)^{\frac{1}{q}},
\end{equation}
again a Banach space, coinciding with a real interpolation space between $D(A_p)$ and $H_p$ and with the continuous and dense embeddings $D(A_p)\subset D_p^q\subset H_p$, see \cite{MR230022}).

 Now, we can state the following results useful in the next sequel.
 \begin{lemma}\label{main lemma}\cite[Lemma 10.1]{MR1893419}
     Let $D\subset \mathbb{R}^N, (N\ge 2)$ be a bounded open set with $\partial D\in C^{2,\mu} (0<\mu<1)$ and assume $1<p,q<\infty, ~b>0$. If $u_0\in D_p^q, ~g\in L^q([0,b],H_p)$, then there exists a unique function $u$ satisfying $u\in L^q([0,b],D(A_p)), ~\partial_t u\in L^q([0,b],H_p)$ and
     \begin{equation}\label{Stokes equation}
         \begin{cases}
           \partial_t u+\frac{1}{\operatorname{Re}}A_pu=g, ~\text{a.a. in}~(0,b)\\
             u(\cdot,0)=u_0.
         \end{cases}
     \end{equation}
     Furthermore,
     \begin{equation}
        \norm{u}^q_{L^q([0,b],D(A_p))}+\norm{\partial_tu}^q_{L^q([0,b],H_p)}\le c_A\left(\norm{u_0}^q_{D_p^q}+\norm{g}^q_{L^q([0,b],H_p)}\right),
     \end{equation}
     where $c_A=c_A(\operatorname{Re}, p, q, D)$.
 \end{lemma}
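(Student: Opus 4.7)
The plan is to establish the result in three parts: writing the unique candidate solution via the analytic semigroup, controlling the homogeneous part through the $D_p^q$-norm, and controlling the inhomogeneous part through an abstract maximal $L^q$-regularity argument for the generator $-A_p$.

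First I would reduce to a cleaner setting: since $-A_p$ generates a bounded analytic $C_0$-semigroup on $H_p$, so does $-A_p/\operatorname{Re}$, with the same qualitative properties and with constants depending on $\operatorname{Re}$. By the standard theory of analytic semigroups, if the problem admits a solution with $u\in L^q([0,b],D(A_p))$ and $\partial_t u\in L^q([0,b],H_p)$, it must coincide with the mild solution
\begin{equation*}
u(t)=e^{-tA_p/\operatorname{Re}}u_0+\int_0^t e^{-(t-s)A_p/\operatorname{Re}}g(s)\,ds,
\end{equation*}
which gives uniqueness immediately. Thus the real work is to verify that this formula delivers the claimed regularity and the stated a priori bound.

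For the homogeneous part $u_h(t)=e^{-tA_p/\operatorname{Re}}u_0$, the key observation is that, up to the rescaling $t\mapsto t/\operatorname{Re}$, the integral $\int_0^b\norm{A_pu_h(t)}_{H_p}^q\,dt$ is controlled by the integral appearing in the definition of $\norm{u_0}_{D_p^q}$, the factor $\operatorname{Re}$ being absorbed into the final constant; this is exactly the motivation for choosing the interpolation space $D_p^q$ as the data class. For the inhomogeneous part $u_g(t)=\int_0^t e^{-(t-s)A_p/\operatorname{Re}}g(s)\,ds$, I would invoke maximal $L^q$-regularity of $A_p$ on $H_p$: because $H_p$ is a closed subspace of $L^p(D)^3$ (hence a UMD space for $1<p<\infty$) and $A_p$ admits bounded imaginary powers on $H_p$ (Giga's classical result for the Stokes operator on smooth bounded domains), the Dore--Venni theorem---equivalently, Weis's characterization via $\mathcal{R}$-boundedness---ensures that the convolution map $g\mapsto A_p u_g$ is bounded from $L^q([0,b],H_p)$ into itself, with constant depending only on $p,q,\operatorname{Re}$ and $D$. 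Once $A_p u\in L^q$ is controlled, the equation itself recovers $\partial_t u=g-(A_p/\operatorname{Re})u$ in the same space.

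Summing the two contributions and using the equivalence $\norm{v}_{D(A_p)}\sim \norm{A_pv}_{H_p}+\norm{v}_{H_p}$, together with the continuous embedding $D_p^q\hookrightarrow H_p$, produces the asserted inequality. The \emph{main obstacle} is the maximal regularity assertion for the inhomogeneous part: it is the only non-elementary ingredient and rests on the nontrivial boundedness of the imaginary powers of $A_p$ on $H_p$. Everything else---uniqueness, the bound on the semigroup orbit of $u_0$, and the identification of $\partial_t u$---is routine analytic semigroup theory combined with the very definition of $D_p^q$.
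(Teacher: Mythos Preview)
The paper does not supply its own proof of this lemma: it is quoted verbatim from \cite[Lemma~10.1]{MR1893419} and used as a black box throughout Sections~3 and~4. So there is no in-paper argument to compare against.

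That said, your sketch is correct and is precisely the route taken in the cited source (and in Giga--Sohr \cite{MR1138838}, on which \cite{MR1893419} relies): the mild formula gives uniqueness; the homogeneous orbit is controlled in $L^q([0,b],D(A_p))$ by the very definition of the trace space $D_p^q$; and the inhomogeneous part is handled by maximal $L^q$-regularity of $A_p$ on $H_p$, which follows from Giga's bounded imaginary powers for the Stokes operator on smooth bounded domains together with Dore--Venni (the UMD property of $H_p\subset L^p(D)^3$ being immediate). One small point worth making explicit in a full write-up: to recover the $\norm{u}_{L^q([0,b],D(A_p))}$ piece and not just $\norm{A_pu}_{L^q}$, you use that $0\in\rho(A_p)$ on a bounded domain (so the graph norm is equivalent to $\norm{A_p\cdot}_{H_p}$), or alternatively bound $\norm{u_g}_{L^q([0,b],H_p)}$ directly from the boundedness of the semigroup and Young's inequality. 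Either way this is routine, and your identification of the bounded-imaginary-powers step as the one nontrivial ingredient is exactly right.
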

 \begin{lemma}\label{estimate lemma}\cite[Lemma 10.2]{MR1893419}
Let $D \subset \mathbb{R}^N$ ($N \geq 2$) be a bounded open set, with $\partial D \in C^1$ and assume $N < p < +\infty$, $1 < q < +\infty$, $b > 0$. Assume that $u \in L^q([0,b],D(A_p))$ and $\partial_t u \in L^q([0,b],H_p)$. Then
\begin{equation*}
    u \in C([0,b],H_p) \cap L^{2q}([0,b],V_p) \cap L^{\frac{2pq}{N}}([0,b],L^\infty(D)^3).
\end{equation*}

Furthermore,
\begin{equation}
    \|u\|_{L^{2q}([0,b],V_p)} \leq C \|u\|_{L^\infty([0,b],H_p)}^{1/2} \|u\|_{L^q([0,b],D(A_p))}^{1/2}
    \tag{10.3}
\end{equation}
and
\begin{equation}
    \|u\|_{L^{\frac{2pq}{N}}([0,b],L^\infty(D)^3)} \leq C \|u\|_{L^\infty([0,b],H_p)}^{1-\frac{N}{p}} \|u\|_{L^{2q}([0,b],V_p)}^{\frac{N}{p}},
    \tag{10.4}
\end{equation}
where $C = C(p, D)$.
\end{lemma}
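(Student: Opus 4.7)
The three conclusions are in increasing order of difficulty, and my plan is to obtain them by combining a spatial interpolation inequality with a time-integrated Hölder argument.

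\textbf{Continuity in $H_p$.} Since $D(A_p) \hookrightarrow H_p$ continuously, the hypothesis $u \in L^q([0,b],D(A_p))$ together with $\partial_t u \in L^q([0,b],H_p)$ places $u$ in $W^{1,q}([0,b],H_p)$, and hence $u$ admits a representative in $C([0,b],H_p)$ by the standard one-dimensional Sobolev embedding. In particular $\|u\|_{L^\infty([0,b],H_p)}$ is finite, which is the key quantity appearing on the right of (10.3) and (10.4).

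\textbf{The estimate in $V_p$.} I would first establish the pointwise (in $t$) interpolation inequality
\begin{equation*}
    \|v\|_{V_p} \le C\, \|v\|_{H_p}^{1/2}\, \|v\|_{D(A_p)}^{1/2}\qquad \text{for } v\in D(A_p),
\end{equation*}
which is the $W^{1,p}$-vs-$W^{2,p}$ interpolation on the solenoidal scale; it follows either from the fact that $D_p^2$ is the real interpolation space of index $1/2$ between $H_p$ and $D(A_p)$, or directly from the equivalent Sobolev scale since $V_p$ is endowed with the $W^{1,p}$ norm and $D(A_p)$ with the $W^{2,p}$ norm. Raising this to the $2q$-th power, integrating over $[0,b]$, and pulling the $L^\infty$-in-time $H_p$ norm out of the integral gives
\begin{equation*}
    \int_0^b \|u(t)\|_{V_p}^{2q}\,dt \;\le\; C\,\|u\|_{L^\infty([0,b],H_p)}^{q}\int_0^b \|u(t)\|_{D(A_p)}^{q}\,dt,
\end{equation*}
and (10.3) follows by taking the $2q$-th root.

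\textbf{The $L^\infty(D)^3$ estimate.} The hypothesis $p>N$ makes $W^{1,p}_0(D)^3 \hookrightarrow L^\infty(D)^3$ and enables the Gagliardo--Nirenberg type inequality
\begin{equation*}
    \|v\|_{L^\infty(D)^3} \;\le\; C\, \|v\|_{L^p(D)^3}^{1-N/p}\, \|\nabla v\|_{L^p(D)^3}^{N/p}\qquad\text{for }v\in W^{1,p}_0(D)^3,
\end{equation*}
which, restricted to $V_p$, becomes $\|v\|_{L^\infty}\le C\,\|v\|_{H_p}^{1-N/p}\|v\|_{V_p}^{N/p}$. I would then raise this pointwise estimate to the power $2pq/N$, integrate in $t$, and factor out the $L^\infty$-in-time norm of $\|u(t)\|_{H_p}$ exactly as in the previous step; the exponents are tuned precisely so that the remaining $t$-integral is $\|u\|_{L^{2q}([0,b],V_p)}^{2q}$, and taking the $(2pq/N)$-th root produces (10.4).

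\textbf{Main obstacle.} The steps are essentially Hölder's inequality in time once the two spatial interpolation inequalities are in hand, so the only nontrivial point is justifying those inequalities on the solenoidal scale $H_p \subset V_p \subset D(A_p)$ rather than on the ambient Sobolev spaces. This is harmless because $H_p$ is topologically just a closed subspace of $L^p(D)^3$ and $D(A_p)$ is topologized by an equivalent $W^{2,p}$ norm thanks to the smoothness of $\partial D$, so the classical Gagliardo--Nirenberg and Sobolev-interpolation inequalities restrict without loss. With those pointwise bounds settled, the two displayed inequalities of the lemma reduce to a one-line Hölder computation.
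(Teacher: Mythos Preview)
The paper does not supply its own proof of this lemma: it is quoted verbatim as \cite[Lemma~10.2]{MR1893419} and used as a black box in the proof of Theorem~\ref{Main Theorem}. There is therefore nothing in the paper to compare your argument against.

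That said, your sketch is the standard route and is correct. The three ingredients you isolate---the embedding $W^{1,q}([0,b],H_p)\hookrightarrow C([0,b],H_p)$, the spatial interpolation $\|v\|_{W^{1,p}}\le C\|v\|_{L^p}^{1/2}\|v\|_{W^{2,p}}^{1/2}$, and the Gagliardo--Nirenberg inequality $\|v\|_{L^\infty}\le C\|v\|_{L^p}^{1-N/p}\|\nabla v\|_{L^p}^{N/p}$ for $p>N$---are exactly what is needed, and your observation that they transfer to the solenoidal scale because $H_p$, $V_p$, $D(A_p)$ carry norms equivalent to the ambient $L^p$, $W^{1,p}$, $W^{2,p}$ norms is the right justification. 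The time integrations you describe then produce (10.3) and (10.4) with the stated exponents; in particular your check that $(N/p)\cdot(2pq/N)=2q$ is what makes the second H\"older step close. No gap.
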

\subsection{Multivalued Maps}
Let us mention some preliminaries concerning multivalued maps. We refer to the books \cite{MR1485775},\cite{MR2976197} for this. 

The following definitions clarify the continuity and measurability of multivalued maps, with which we start this section.
\begin{definition}
	Let $Y$ be a Banach space. A multivalued map $\Gamma: X\multimap Y$ is considered convex valued if the set $\Gamma(x)$ is convex for every $x\in X$ and closed valued if $\Gamma(x)$ is closed for all $x\in X$.
\end{definition}
\begin{definition}
	A multivalued map $\Gamma: X\multimap Y$ is lower semicontinuous at the point $x_0\in X$ if $\{x_n\}_{n\in \mathbb{N}}$ is a sequence in $X$ such that $x_n\to x_0\in X$, then for every $y\in \Gamma(x_0)$ there exists a sequence $\{y_n\}_{n\in \mathbb{N}}$ in $Y$ with $y_n\in \Gamma(x_n)$ for every $n\in \mathbb{N}$ such that $y_n\to y$.
\end{definition}
Let $\mathcal{B}(X)$ and $\mathcal{B}(Y)$ be the $\sigma$-algebra of Borel sets in $X$ and $Y$ respectively, and let $\Sigma\otimes\mathcal{B}(X)$ be the $\sigma$-algebra of sets in $I\times X$ generated by sets $U\times V$, where $U\in \Sigma, V\in \mathcal{B}(X)$.
\begin{definition}\cite{MR1485775}
	A multivalued mapping $\Gamma: I\times X\multimap Y$ is product measurable if
	\begin{equation}
		\Gamma^{-1}(V)=\{(t,x)\in I\times X: \Gamma(t,x)\cap V\neq \phi\}\in \Sigma\otimes\mathcal{B}(X),
	\end{equation}
	for any closed set $V\subset Y$.
\end{definition}
The Michael Selection Theorem guarantees the existence of a continuous selection for a lower semicontinuous multimap with convex values. However, this Theorem doesn't hold for multimap with nonconvex values. In such cases, the author of \cite{MR730018} introduces the concept of decomposable sets to establish the existence of a continuous selection. Let us define decomposable sets now.
Formally, the concept of decomposability resembles that of convexity, and as we will see in this section, decomposable sets behave like convex sets. For this reason, decomposable sets play a central role in many applications.
\begin{definition}
A set $K\subset L^1(I,X)$ is said to be decomposable if for every triple $(D,f_1,f_2)\in \Sigma\times K\times K$ we have
	\begin{equation}
		\chi_{D} f_1+\chi_{I\setminus D}f_2\in K.
	\end{equation}
\end{definition}
We now consider the nonconvex-decomposable version of Michael's selection theorem.
\begin{theorem}\label{decomposableselectiontheorem}\cite[Theorem 4.5.32]{MR2024162}
If $Z$ is a separable metric space, $X$ is a separable Banach space and $F: Z\multimap L^1(I, X)$ is lower semicontinuous and has closed decomposable values, then $F$ admits a continuous selection.
\end{theorem}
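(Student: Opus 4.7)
The strategy is the Bressan--Colombo construction: I would build a Cauchy sequence $\{f_n\}$ of continuous \emph{approximate} selections $f_n\colon Z\to L^1(I,X)$ with $\operatorname{dist}(f_n(z),F(z))<2^{-n}$ and $\norm{f_{n+1}(z)-f_n(z)}_{L^1}<3\cdot 2^{-n}$, and then pass to the limit. Completeness of $L^1(I,X)$ (which uses separability of $X$) gives a pointwise limit $f$; the uniform bound on consecutive differences upgrades this to a uniform limit of continuous maps, hence $f$ is continuous; and closedness of $F(z)$ forces $f(z)\in F(z)$.

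The key step is the first approximate selection. Fix $\varepsilon>0$. For each $z_0\in Z$, pick any $y_0\in F(z_0)$; by lower semicontinuity there is a neighborhood $U_{z_0}$ of $z_0$ such that for every $z\in U_{z_0}$ there exists $y\in F(z)$ with $\norm{y-y_0}_{L^1}<\varepsilon$. Because $Z$ is a separable metric space it is paracompact, so I refine $\{U_{z}\}_{z\in Z}$ to a locally finite open cover $\{V_i\}_{i\in \mathbb{N}}$, choose a witness $y_i\in L^1(I,X)$ for each $V_i$, and take a subordinate continuous partition of unity $\{\phi_i\}$ with $\sum_i\phi_i\equiv 1$. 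In Michael's theorem one would set $f(z)=\sum_i\phi_i(z)y_i$, invoking convexity; here I instead construct, for each $z$, a measurable partition $\{A_i(z)\}$ of $I$ with $\mu(A_i(z))=\phi_i(z)\mu(I)$ depending continuously on $z$ in the sense that $z\mapsto \chi_{A_i(z)}$ is continuous into $L^1(I)$, and define $f(z)(t)=\sum_i\chi_{A_i(z)}(t)\,y_i(t)$. Decomposability of $F(z)$, combined with careful nesting of the $A_i(z)$ so that only indices with $z\in V_i$ contribute, then places $f(z)$ within $\varepsilon$ of $F(z)$.

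The iteration is then standard. Given $f_n$ with $\operatorname{dist}(f_n(z),F(z))<2^{-n}$, consider the truncated multimap $G_n(z)=\{y\in F(z):\norm{y-f_n(z)}_{L^1}<2^{-n+1}\}$. Lower semicontinuity of $F$ and continuity of $f_n$ transfer to $G_n$, and decomposability is preserved under this kind of open truncation (decomposable sets are stable under restriction by measurable test sets, and the $L^1$-open ball around $f_n(z)$ intersects the decomposable set $F(z)$ in a set that remains decomposable locally in $z$). Apply the previous construction to $G_n$ with $\varepsilon=2^{-(n+1)}$ to produce $f_{n+1}$; the triangle inequality then gives the Cauchy estimate.

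The main obstacle is the partition-of-unity step: realizing the scalar weights $\phi_i(z)$ as the Lebesgue measures of a family of measurable subsets $\{A_i(z)\}\subset I$ that depends \emph{continuously} on $z$. This is a Lyapunov-type construction; it is precisely where decomposability substitutes for convexity, replacing the convex combination $\sum\phi_i(z)y_i$ by a ``measurable combination'' $\sum\chi_{A_i(z)}y_i$. Verifying simultaneously that this map is continuous in $z$ and that the resulting element lies within $\varepsilon$ of $F(z)$ is the technical heart of the argument; once it is in place, the iteration and limit passage are routine.
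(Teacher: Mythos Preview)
The paper does not prove this theorem; it is quoted from \cite[Theorem 4.5.32]{MR2024162} (the Bressan--Colombo--Fryszkowski selection theorem) and used as a black box in the proofs of Theorems~\ref{Main Theorem} and~\ref{Main result 2}. There is therefore no proof in the paper to compare your attempt against.

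On its own merits, your outline is the correct strategy and you have identified the decisive idea: replacing the convex combination $\sum_i \phi_i(z)\,y_i$ of Michael's theorem by a ``measurable combination'' $\sum_i \chi_{A_i(z)}\,y_i$ with $\mu(A_i(z))=\phi_i(z)\,\mu(I)$, via a Lyapunov-type continuous partition of $I$.

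There is, however, a genuine gap in the iteration step. The truncated multimap
\[
G_n(z)=\{y\in F(z):\norm{y-f_n(z)}_{L^1}<2^{-n+1}\}
\]
is \emph{not} decomposable in general. Take $I=[0,1]$, $X=\mathbb{R}$, $F(z)=L^1[0,1]$, $f_n(z)=0$, and radius $1$: the functions $y_1=1.8\,\chi_{[0,1/2]}$ and $y_2=1.8\,\chi_{[1/2,1]}$ both have $L^1$-norm $0.9<1$, yet $\chi_{[0,1/2]}y_1+\chi_{[1/2,1]}y_2\equiv 1.8$ has norm $1.8$. An integral constraint such as an $L^1$-ball is not preserved under the decomposability operation; only pointwise constraints are. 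Your parenthetical (``remains decomposable locally in $z$'') does not repair this, and so you cannot simply re-run the first-step construction on $G_n$. In the actual Bressan--Colombo proof the iteration is handled by a sharper approximate-selection lemma: given continuous $g$ with $d(g(z),F(z))<\psi(z)$ for a continuous positive $\psi$, one produces a continuous $f$ with $d(f(z),F(z))$ as small as desired \emph{and} $\norm{f(z)-g(z)}_{L^1}\le \psi(z)$, without ever intersecting $F(z)$ with a ball. The partition construction is applied once, in this stronger form, and the Cauchy estimate comes out of the lemma directly. This is the piece you should revisit; contrary to your last sentence, the iteration is not routine once the first approximate selection is in place.
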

\section{Navier Stokes Equations}
In this section, we study the existence of local strong solutions for the nonstationary multivalued version of Navier-Stokes equations given by \eqref{Navier Stokes}-\eqref{IC}. Throughout this section, we assume that $3<p<\infty$ and $1<q<\infty$. We assume the following conditions on the multimap $F$.

The multimap $F:[0,a]\times H_p\multimap L^p(D)^3$ satisfies the following properties:
\begin{itemize}
 \item[(F1)] $F: [0,a]\times  H_p \multimap L^p(D)^3 $ is product measurable.
    \item[(F2)] $F(t,\cdot): H_p\multimap L^p(D)^3$ is lower semicontinuous for a.a. $t\in [0,a]$.
    \item[(F3)] there exists $\alpha\in L^q(0,a)$ with $\alpha\ge 0$ for a.a. $ t\in (0,a)$ and a monotonically increasing function $\eta_F:[0,\infty)\to [0,\infty)$ such that
    \begin{equation}
        \norm{F(t,u)}_{L^p(D)^3}\le \alpha(t)(1+\eta_F(\norm{u}_{H_p})),
    \end{equation}
    for a.a. $t\in (0,a)$ and all $u\in H_p$.
\end{itemize}
We are now ready to prove the first main result of this paper.
\begin{theorem}\label{Main Theorem}
Let $D \subset \mathbb{R}^3$ be open, bounded and connected with $\partial D\in C^{2, \mu}, ~0<\mu<1$ and $a>0$. Also let $u_0\in D_p^q$ with $3<p<\infty, ~1<q<\infty$ and $F:[0,a]\times H_p\multimap L^p(D)^3$ satisfying the assumptions (F1)-(F3). Then there exists $b>0$
and 
\begin{equation}
    u\in L^q([0,b], D(A_p))~\text{with}~\partial_t u\in L^q([0,b], H_p),
\end{equation}
\begin{equation}
    p\in L^q([0,b], W^{1,p}(D)^3)
\end{equation}
such that $(u,p)$ is a solution to the problem \eqref{Navier Stokes}-\eqref{IC} with $f\in L^q([0,b], L^p(D)^3)$ with $f(t)\in F(t, u(t))$ for a.a. $t\in [0,b]$.
\end{theorem}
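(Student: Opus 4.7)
My plan is to combine the Bressan--Colombo-type continuous selection supplied by Theorem~\ref{decomposableselectiontheorem} with Schauder's fixed-point theorem, applied to the single-valued solution operator built from the Stokes maximal regularity of Lemma~\ref{main lemma}. Introduce
\[
X(b):=\{w\in L^q([0,b],D(A_p)):\partial_t w\in L^q([0,b],H_p)\},
\]
and for $u\in X(b)$ the selection set
\[
\mathcal{N}(u):=\bigl\{f\in L^q([0,b],L^p(D)^3): f(t)\in F(t,u(t))\text{ for a.a. }t\in[0,b]\bigr\}.
\]
Assumptions (F1)--(F3), together with the continuous embedding $X(b)\hookrightarrow C([0,b],H_p)$ from Lemma~\ref{estimate lemma}, guarantee that $\mathcal{N}(u)$ is nonempty, closed and decomposable in $L^q([0,b],L^p(D)^3)$, while a standard Nemitsky-type argument based on (F2) shows that $\mathcal{N}$ is lower semicontinuous. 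Theorem~\ref{decomposableselectiontheorem} then provides a continuous selection $u\mapsto g(u)$ with $g(u)(t)\in F(t,u(t))$ a.e.

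\textbf{Solution operator and convective estimate.} Applying the Helmholtz projector $P_p$ to \eqref{Navier Stokes} recasts the problem, for each $u$, as the linear Stokes problem
\[
\partial_t v+\tfrac{1}{\operatorname{Re}}A_pv=P_pg(u)-P_p((u\cdot\nabla)u),\qquad v(0)=u_0,
\]
which by Lemma~\ref{main lemma} has a unique solution $v=:T(u)\in X(b)$ whenever the right-hand side lies in $L^q([0,b],H_p)$. The key bound is the quadratic estimate of the convective term. Since $p>3$, Lemma~\ref{estimate lemma} yields $u\in L^{2q}([0,b],V_p)\cap L^{2pq/3}([0,b],L^\infty(D)^3)$ together with interpolation inequalities controlling these norms by $\|u\|_{X(b)}$. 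Hölder in $D$ and then in $[0,b]$ with exponents $2pq/3$ and $2q$ gives
\[
\|(u\cdot\nabla)u\|_{L^r([0,b],L^p(D)^3)}\le C\|u\|_{X(b)}^{2},\qquad \tfrac{1}{r}=\tfrac{p+3}{2pq}<\tfrac{1}{q},
\]
so that a further time Hölder furnishes a gain $b^{\theta}$ with $\theta:=\tfrac{1}{q}-\tfrac{1}{r}>0$. Combined with (F3) and Lemma~\ref{main lemma}, this yields on the closed ball $K_R\subset X(b)$
\[
\|T(u)\|_{X(b)}\le C_1\Bigl(\|u_0\|_{D_p^q}+\|\alpha\|_{L^q(0,a)}(1+\eta_F(C_2R))+b^{\theta}C_3R^{2}\Bigr),
\]
so choosing $R$ large in terms of the data and then $b$ small gives $T(K_R)\subseteq K_R$.

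\textbf{Compactness, fixed point and pressure.} The principal obstacle is compactness for Schauder, since $K_R$ is not norm-compact in $X(b)$. I would resolve this via Aubin--Lions, which makes $K_R$ relatively compact in $C([0,b],H_p)$, and by invoking Theorem~\ref{decomposableselectiontheorem} with $Z=C([0,b],H_p)$ (where (F2) still suffices for lower semicontinuity of $\mathcal{N}$), producing a selection $g$ that is continuous as a map $C([0,b],H_p)\to L^q([0,b],L^p(D)^3)$. With this stronger choice, continuity and compactness of $T$ on the closed convex hull of $K_R$ in $C([0,b],H_p)$ follow from the continuity of $g$, the local Lipschitz character of $(u,v)\mapsto(u\cdot\nabla)v$ on bounded sets of the relevant interpolation spaces, and Lemma~\ref{main lemma}. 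Schauder's theorem then supplies a fixed point $u=T(u)\in K_R$. Setting $f:=g(u)\in L^q([0,b],L^p(D)^3)$, the residual $\partial_t u-\tfrac{1}{\operatorname{Re}}\Delta u+(u\cdot\nabla)u-f$ lies in the kernel of $P_p$, so the Helmholtz/de~Rham decomposition provides a pressure $p\in L^q([0,b],W^{1,p}(D))$ whose gradient equals this residual, completing the construction of the solution to \eqref{Navier Stokes}--\eqref{IC}.
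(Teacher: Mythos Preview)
Your proposal is correct and follows essentially the same route as the paper: build a continuous selection of the Nemytskii multimap via the decomposable-valued selection theorem, feed it into the single-valued solution map coming from Stokes maximal regularity (Lemma~\ref{main lemma}), close the ball via the convective estimate of Lemma~\ref{estimate lemma} with the identical small-time gain $b^{(p-3)/(2pq)}$, apply Schauder, and recover the pressure by Helmholtz. The only cosmetic difference is that you run the fixed-point argument in $C([0,b],H_p)$ (via Aubin--Lions), whereas the paper works in $L^q([0,b],H_p)$; both choices render the ball compact and either is adequate.
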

\begin{proof}
For $b>0$, we introduce the space
    \begin{equation}
        U(b)=\{u\in L^q([0,b],D(A_p)): \partial_t u\in L^q([0,b],H_p)\}.
    \end{equation}
Consider the selection multimap $S_F: U(b)\subset L^q([0,b], H_p)\multimap L^q([0,b],L^p(D)^3)$ as follows:
 \begin{equation}
     S_F(u)=\{f\in L^q([0,b],L^p(D)^3): f(t)\in F(t,u(t)),~\text{a.a.}~t\in [0,b]\}.
 \end{equation}
 By virtue of assumption (F1), the multimap $F$ is product measurable. Hence, the multimap $t\multimap F(t,u(t))$ is closed valued and measurable for every $u\in L^q([0,b], H_p)$. In accordance with the Ryll-Kurtowski Selection Theorem (see \cite{MR1485775}), we can find a measurable mapping $f:[0,b]\to L^p(D)^3$ such that $f(t)\in F(t, u(t))$ for a.a. $t\in [0,b]$. By virtue of assumption (F3) we obtain
\begin{equation}
    \norm{f(t)}\le \alpha(t) (1+\eta_F(\norm{u(t)}_{H_p})), ~\text{a.a.}~t\in [0,b].
\end{equation}
We now estimate
\begin{equation}
\|f\|_{L^q([0,b],L^p(D)^3)}^q = \int_0^{b} \|f(t)\|_{L^p(D)^3}^q \, dt
\leq \int_0^{b} \alpha(t)^q \left(1 + \eta_F(\|u(t)\|_{H_p})\right)^q \, dt.
\end{equation}
Now, from the definition of the set $U(b)$, it is clear that if $u\in U(b)$, then $u$ is almost everywhere equal to an absolutely continuous function in $H_p$ and hence $u\in L^{\infty}([0,b], H_p)$.
Hölder’s inequality yields
\begin{equation}\label{4.9}
\|u\|_{L^\infty([0,b],H_p)} \leq \|u(0)\|_{H_p} + \|\partial_t u\|_{L^1([0,b],H_p)} \leq \|u(0)\|_{H_p} + b^{1/q^{\prime}} \|\partial_t u\|_{L^q([0,b],H_p)}.
\end{equation}
In the above, $q^{\prime}$ is the conjugate of $q$.

Together with \eqref{4.9} and with the monotonicity of \( \eta_F\), we have
\begin{equation}
\|f\|_{L^q([0,b],L^p(D)^3)}^q \leq \|\alpha\|_{L^q(0,b)}^q \left(1 + \eta_F\left(\|u(0)\|_{H_p} + b^{1/q^{\prime}} \|\partial_t u\|_{L^q([0,b],H_p)} \right)\right)^q.
\end{equation}
Consequently,
\begin{equation}\label{estimate 4}
  \|f\|_{L^q([0,b],L^p(D)^3)} \leq \|\alpha\|_{L^q(0,b)}  \left(1 + \eta_F\left(\|u(0)\|_{H_p} + b^{1/q^{\prime}} \|\partial_t u\|_{L^q([0,b],H_p)} \right)\right).
\end{equation}
From this we conclude that $f\in S_F(u)$ for every $u\in U(b)$. Consequently, the multimap $S_F$ is well defined. Also, $S_F$ is lower semicontinuous \cite[Theorem 3.2]{MR1972917} and has closed and decomposable values. Thus by Theorem \ref{decomposableselectiontheorem}  we obtain a continuous selection, say $\Lambda: U(b)\subset L^q([0,b], H_p)\to L^q([0,b],L^p(D)^3)$ from the multimap $S_F$, that means $\Lambda(u)\in S_F(u)$ for every $u\in U(b)$.

    For $R>0$, let
    \begin{equation}
        Y(b, R)=\{u\in U(b): u(0)=u_0, \norm{u}^q_{L^q([0,b],D(A_p))}+\norm{\partial_t u}^q_{L^q([0,b],H_p)}\le R^q\}.
    \end{equation}
    In view of Lemma \ref{main lemma}, we obtain a solution $u\in U(b)$ of the problem \eqref{Stokes equation} corresponding to $g=0$. Moreover, we have
     \begin{equation}
    \norm{u}^q_{L^q([0,b],D(A_p))}+\norm{\partial_tu}^q_{L^q([0,b],H_p)}\le c_A\left(\norm{u_0}^q_{D_p^q}\right).
     \end{equation}
     Therefore, if we choose $R^q>c_A(\norm{u_0}^q_{D_p^q})$, then we obtain 
      \begin{equation}
\norm{u}^q_{L^q([0,b],D(A_p))}+\norm{\partial_tu}^q_{L^q([0,b],H_p)}\le R^q.
     \end{equation}
     Therefore, $Y(b, R)$ is non-empty.

   We define $\Upsilon:Y(b,R)\to L^q([0,b], H_p)$ as follows: $\Upsilon(\tilde{u})=u$ if and only if $u\in L^q([0,b], H_p)$ is a solution to the following problem
    \begin{equation}\label{L1}
        \begin{cases}
            \partial_t u+\frac{1}{ \operatorname{Re}}A_pu=P_p(-(\tilde{u}\cdot \nabla)\tilde{u}+\Lambda \tilde{u}), ~\text{in}~(0,b)\\
            u(0)=u_0.
        \end{cases}
    \end{equation}
    In the above $P_p$ represents the Helmholtz projection operator.
    In view of Lemma \ref{main lemma} we obtain a unique solution $u\in U(b)$ to the problem \eqref{Stokes equation} corresponding to $g=P_p(-(\tilde{u}\cdot \nabla)\tilde{u}+\Lambda \tilde{u})\in L^q([0,b],H_p)$. Also, we obtain $u=\Upsilon(\tilde{u})$ satisfies the estimate
  \begin{equation}
\norm{u}^q_{L^q([0,b], D(A_p))}+\norm{\partial_tu}^q_{L^q([0,b], H_p)}\le c_A(\norm{u_0}^q_{D_p^q}+\norm{g}^q_{L^q([0,b], H_p)}), 
  \end{equation}
    where $c_A$ depends on $\operatorname{Re}$, $p,q$. Therefore, the map $\Upsilon$ is well defined. It is also clear that the fixed points of this map $\Upsilon$ are the solutions to the problem \eqref{Navier Stokes}. In fact,  let $u\in Y(b, R)$ be a fixed point of $\Upsilon$. Then, we have
    \begin{equation}
         \partial_t u+\frac{1}{\operatorname{Re} }A_pu=P_p(-(u\cdot \nabla)u+\Lambda u), ~\text{in}~ (0,b). 
    \end{equation}
    Since $\partial_tu\in L^q([0,b],H_p)$, we have $P_p(\partial_tu)=\partial_t u$. From the definition of the Stokes operator $A_p$, we obtain
    \begin{equation}
        P_p\left(\partial_tu-\frac{1}{\operatorname{Re}}\Delta u+(u\cdot \nabla)u-\Lambda(u))\right)=0,~\text{in}~(0,b).
    \end{equation}
    Now, the Helmholtz decomposition implies that, for almost all $t\in (0,b),$ there exists $p(t)\in W^{1,p}(D)^3$ such that
    \begin{equation}
    \partial_tu-\frac{1}{\operatorname{Re}}\Delta u+(u\cdot \nabla)u-\Lambda(u)=\nabla p,~\text{in}~(0,b).
    \end{equation}
    Due to the regularity of $u$ and the map $\Lambda$, we obtain $p\in L^q([0,b], W^{1,p}(D)^3)$. This shows that $(u,p)$ is a solution to the problem \eqref{Navier Stokes}.\\

    We now apply the Schauder Fixed Point Theorem in order to prove the existence of fixed points of the map $\Upsilon$. For this we need to prove that $\Upsilon$ maps the compact set $Y(b,R)\subset L^q([0,b], H_p)$ into $Y(b,R)$ for some $b, R>0$ and the map $\Upsilon$ is continuous.

    We start by proving that $\Upsilon$ maps $Y(b, R)$ into itself for some $b>0$ and $R>0$. Let $\tilde{u}\in Y(b, R).$ Then by means of Lemma \ref{estimate lemma}, we have $\tilde{u}\in L^{2q}([0,b],V_p)\cap L^{\frac{2pq}{3}}([0,b], L^{\infty}(D)^3)$ and there exists $c_1>0$, only depending on $p$ and $D$ such that
    \begin{equation}\label{estimate 1}
        \norm{\tilde{u}}_{L^{2q}([0,b],V_p)}\le c_1\norm{\tilde{u}}^{\frac{1}{2}}_{L^{\infty}([0,b],H_p)}\norm{\tilde{u}}^{\frac{1}{2}}_{L^q([0,b],D(A_p))}\le c_1\norm{\tilde{u}}^{\frac{1}{2}}_{L^{\infty}([0,b],H_p)}\norm{\tilde{u}}^{\frac{1}{2}}_{Y(b,R)}.
    \end{equation}
    Also, 
    \begin{equation}\label{estimate 2}
    \norm{\tilde{u}}_{L^{\frac{2pq}{3}}([0,b],L^{\infty}(D)^3)}\le c_1\norm{\tilde{u}}_{L^{\infty}([0,b],H_p)}\norm{\tilde{u}}^{\frac{3}{p}}_{L^{2q}([0,b],V_p)}\le c_1^{1+\frac{3}{p}}\norm{\tilde{u}}^{1-\frac{3}{2p}}_{L^{\infty}([0,b],H_p)}\norm{\tilde{u}}^{\frac{3}{2p}}_{Y(b,R)}.
    \end{equation}
    Now, Hölder’s inequality yields
    \begin{align*}
        \int_{0}^{b} \|\left(\tilde{u}(t) \cdot \nabla\right) \tilde{u}(t)\|^q_{L^p(D)^3} dt& \leq \int_{0}^{b} \|\tilde{u}(t)\|^q_{L^\infty(D)^3} \|\nabla \tilde{u}(t)\|^q_{L_p(D)^{3 \times 3}} dt\\
        \leq & \left( \int_{0}^{b} \|\tilde{u}(t)\|^{2q}_{L^\infty(D)^3} dt \right)^{1/2} \left( \int_{0}^{b} \|\nabla \tilde{u}(t)\|^{2q}_{L^p(D)^{3 \times 3}} dt \right)^{1/2}\\
        \leq& b^{\frac{p-3}{2p}} \left( \int_{0}^{b} \|\tilde{u}(t)\|^{2pq/3}_{L^\infty(D)^3} dt \right)^{3/(2p)} \left( \int_{0}^{b} \|\nabla \tilde{u}(t)\|^{2q}_{L^p(D)^{3 \times 3}} dt \right)^{1/2}\\
        =& b^{\frac{p-3}{2p}} \|\tilde{u}\|^q_{L^{2pq/3}([0,b],L^\infty(D)^3)} \|\tilde{u}\|^q_{L^{2q}([0,b],V_p)}.
    \end{align*}
Therefore, using estimates \eqref{estimate 1} and \eqref{estimate 2} we obtain 
 \begin{align*}
        \left(\int_{0}^{b} \|\left(\tilde{u}(t) \cdot \nabla\right) \tilde{u}(t)\|^q_{L^p(D)^3} dt\right)^{\frac{1}{q}}
        \le &  b^{\frac{p-3}{2pq}}c_1^{2+\frac{3}{p}}\norm{\tilde{u}}^{1-\frac{3}{2p}}_{L^{\infty}([0,b],H_p)}\norm{\tilde{u}}^{\frac{3}{2p}}_{Y(b,R)}  \norm{\tilde{u}}^{\frac{1}{2}}_{L^{\infty}([0,b],H_p)}\norm{\tilde{u}}^{\frac{1}{2}}_{Y(b,R)}.
    \end{align*}
Finally, the continuous embedding of $Y(b,R)$ into $L^{\infty}([0,b],H_p)$ implies that there exists $c_2>0$, such that 
\begin{equation}
    \norm{v}_{L^{\infty}([0,b]H_p)}\le c_2\norm{v}_{Y(b,R)}, ~\text{for all}~v\in Y(b,R);
\end{equation}
and thus
\begin{align*}
        \left(\int_{0}^{b} \|\left(\tilde{u}(t) \cdot \nabla\right) \tilde{u}(t)\|^q_{L^p(D)^3} dt\right)^{\frac{1}{q}}
        \le &  b^{\frac{p-3}{2pq}}c_1^{2+\frac{3}{p}} c_2^{\frac{3}{2}-\frac{3}{2p}}\norm{\tilde{u}}^2_{Y(b,R)}\le  b^{\frac{p-3}{2pq}}c_1^{2+\frac{3}{p}} c_2^{\frac{3}{2}-\frac{3}{2p}}R^2.
    \end{align*}
Therefore,
\begin{equation}\label{estimate 3}
     \norm{P_p(-(\tilde{u}\cdot \nabla)\tilde{u})}_{L^q([0,b],H_p)} \leq  b^{\frac{p-3}{2pq}}c_1^{2+\frac{3}{p}} c_2^{\frac{3}{2}-\frac{3}{2p}}R^2.
\end{equation}
Now, noticing that $\Lambda$ is a continuous selection of the multimap $S_F$, we obtain
\begin{equation}
(\Lambda \tilde{u})(t)\in F(t, \tilde{u}(t))~\text{for a.a.}~t\in [0,b].
\end{equation}
By virtue of assumption (F3), we obtain
\begin{equation}
    \norm{(\Lambda \tilde{u})(t)}\le \alpha(t) (1+\eta_F(\norm{\tilde{u(t)}}_{H_p}), ~\text{a.a.}~t\in [0,b].
\end{equation}
We now estimate
\begin{equation}\label{4.99}
\|\Lambda (\tilde{u})\|_{L^q([0,b],L^p(D)^3)}^q = \int_0^{b} \|\Lambda(\tilde{u})(t)\|_{L^p(D)^3}^q \, dt
\leq \int_0^{b} \alpha(t)^q \left(1 + \eta_F(\|\tilde{u}(t)\|_{H_p})\right)^q \, dt.
\end{equation}
As mentioned before, we have $\tilde{u}: [0,b]\to H_p$ is almost everywhere equal to an absolutely continuous function and hence \( u \in L^{\infty}([0,b],H_p) \). Hölder’s inequality yields
\begin{equation}
\|\tilde{u}\|_{L^\infty([0,b],H_p)} \leq \|u_0\|_{H_p} + \|\partial_t \tilde{u}\|_{L^1([0,b],H_p)} \leq \|u_0\|_{H_p} + b^{1/q^{\prime}} \|\partial_t \tilde{u}\|_{L^q([0,b],H_p)}.
\end{equation}
In the above, $q^{\prime}$ is the conjugate of $q$.

Together with \eqref{4.99} and with the monotonicity of \( \eta_F\), we have
\begin{equation}
\|\Lambda (\tilde{u})\|_{L^q([0,b],L^p(D)^3)}^q \leq \|\alpha\|_{L^q(0,b)}^q \left(1 + \eta_F\left(\|u_0\|_{H_p} + b^{1/q^{\prime}} \|\partial_t \tilde{u}\|_{L^q([0,b],H_p)} \right)\right)^q,
\end{equation}
so \( \tilde{u} \in Y(b,R) \) implies
\begin{equation}
\|\Lambda (\tilde{u})\|_{L^q([0,b],L^p(D)^3)}^q \leq \|\alpha\|_{L^q(0,b)}^q \left(1 + \eta_F\left(\|u_0\|_{H_p} + R b^{1/q^{\prime}} \right)\right)^q.
\end{equation}
Consequently,
\begin{equation}\label{estimate 4}
  \|\Lambda (\tilde{u})\|_{L^q([0,b],L^p(D)^3)} \leq \|\alpha\|_{L^q([0,b])} \left(1 + \eta_F\left(\|u_0\|_{H_p} + R b^{1/q^{\prime}} \right)\right).  
\end{equation}
Therefore, combining estimates \eqref{estimate 3} and \eqref{estimate 4} we obtain
  \begin{align*}
\norm{\tilde{u}}^q_{L^q([0,b], D(A_p))}+\norm{\partial_t\tilde{u}}^q_{L^q([0,b], H_p)}\le& c_A\left(\norm{u_0}^q_{D_p^q}+ b^{\frac{p-3}{2p}}c_1^{q(2+\frac{3}{p})} c_2^{q(\frac{3}{2}-\frac{3}{2p})}R^{2q}\right.\\
&\left. \hspace{3cm}+\|\alpha\|_{L^q[0,b]}^q \left(1 + \eta_F\left(\|u_0\|_{H_p} + R b^{1/q^{\prime}} \right)\right)^q\right)
  \end{align*}
Following the paper \cite{MR4478379} we can now choose $R_0>0$ and $b_0>0$ small such that we have
 \begin{align*}
\norm{u}^q_{L^q([0,b], D(A_p))}+\norm{\partial_tu}^q_{L^q([0,b], H_p)}\le&  R_0^q.
  \end{align*}
  This concludes that $\Upsilon$ maps $Y(b_0,R_0)$ into itself.

  We now prove that the map $\Upsilon$ is continuous. For this we consider $\tilde{u}_n\in Y(b_0, R_0)$ such that $\tilde{u}_n\to \tilde{u}$ in $L^q([0,b], H_p)$. We prove $\Upsilon(\tilde{u}_n)\to \Upsilon(\tilde{u})$ in $L^q([0,b_0], H_p)$. Let $u_n=\Upsilon(\tilde{u}_n)$ and $u=\Upsilon(\tilde{u})$. By the compact embedding of $D(A_p)$ into $H_p$, we obtain $u_n\to v$ in $L^q([0,b_0], H_p)$ for some $v\in L^q([0,b_0],H_p)$. Also, $\{u_n\}_{n\in \mathbb{N}}$ is bounded in $L^q([0,b_0], D(A_p))$ and $\{u_n^{\prime}\}_{n\in \mathbb{N}}$ is bounded in $L^q([0,b_0], H_p)$. By the reflexivity of the Banach spaces $L^q([0,b_0], D(A_p))$ and $L^q([0,b_0], H_p)$ we obtain $u_n \rightharpoonup u$ in $L^q([0,b],D(A_p))$ and $\partial_t u_n\rightharpoonup w$ in $L^q([0,b],H_p)$. It is clear that $w=\partial_t u$. By the uniqueness of the weak limit, we obtain $u=v$. Therefore, $u_n\to u$ in $L^q([0,b_0], H_p)$. By the definition of the map $\Upsilon$, we obtain $u_n$ as the unique solution to the problem
         \begin{equation}
        \begin{cases}
            \operatorname{Re} \partial_t u_n+(1-\alpha)A_pu_n=P_p(-\operatorname{Re}(\tilde{u}_n\cdot \nabla)\tilde{u}_n+\Lambda(\tilde{u_n})), ~\text{in}~(0,b)\\
            u_n(0)=u_0.
        \end{cases}
    \end{equation}
The operator $A_p: D(A_p)\to H_p$ is a linear, bounded operator and weakly sequentially continuous. Also, the map $\Lambda$ is continuous. Therefore, following the proof of \cite[Theorem 4.1]{MR4478379} and passing the limit, we obtain $u$ satisfies
         \begin{equation}
        \begin{cases}
            \operatorname{Re} \partial_t u+(1-\alpha)A_pu=P_p(-\operatorname{Re}(\tilde{u}\cdot \nabla)\tilde{u}++\Lambda(u)), ~\text{in}~(0,b)\\
            u(0)=u_0.
        \end{cases}
    \end{equation}
    This concludes that the map $\Upsilon$ is continuous. Also, it is clear that the set $Y(b_0,R_0)$ is compact in $L^q([0,b],H_p)$ (see the book \cite[Chapter II]{MR1893419}). Therefore, by virtue of Schauder's fixed point theorem, we conclude that the map $\Upsilon$ has a fixed point $u\in Y(b_0, R_0)$ (say). By the definition of the map $\Upsilon$, we say that $u$ is a solution to the problem \eqref{Navier Stokes}-\eqref{IC}.

\end{proof}

\section{Main Results}
In this section, we generalize the main result obtained in the previous section of this paper. 
Let $b>0$, let $X, Z$ be real, reflexive and separable Banach spaces, and let $Y$ be a real Banach space such that we have the continuous embeddings
\begin{equation}
    X\hookrightarrow Y \hookrightarrow Z,
\end{equation}
and the compact embedding $X\hookrightarrow Z$. Further, let $b\in (0,a], ~q\in (1,\infty)$ and let
\begin{equation}
    X(b)=\{u\in L^q([0,b],X): u^{\prime}~\text{exists and}~u^{\prime}\in L^q([0,b],Z)\}.
\end{equation}
Then $X(b)$ is a Banach space endowed with the norm
\begin{equation}
\norm{u}_{X(b)}=\left(\norm{u}^q_{L^q([0,b],X)}+\norm{u^{\prime}}^q_{L^q([0,b],Z)}\right)^{\frac{1}{q}}.
\end{equation}
Before proceeding further, let us define the solution concept of the problem \eqref{Inclusion Problem 1}. We define the measurable selection multimap $S_F:L^q([0,b],X)\multimap L^q([0,b],Z)$ as follows:
\begin{equation}\label{selection multimap}
    S_F(u)=\{f\in L^q([0,b],Z): f(t)\in F(t,u(t))~~\text{a.a.}~t\in [0,b]\}. 
\end{equation}
\begin{definition}
    We say that a function $u\in X(b)$ is a solution to the problem \eqref{Inclusion Problem 1} if $u$ satisfies
    \begin{equation}
          \begin{cases}
        u^{\prime}(t)+A(t)u(t)+B(t)u(t)=f(t),~\text{in}~Z,~ t\in (0,b).\\
        u(0)=u_0~\text{in}~Z,
    \end{cases}
    \end{equation}
    for some $f\in S_F(u)$.
\end{definition}
We now define the operators $\mathcal{A}:X(b)\to L^q([0,b],Z)$ defined by $(\mathcal{A}v)(t)=A(t)v(t)$  for almost all $t\in (0,b)$, and the operator $\mathcal{B}:X(b)\subset L^q([0,b],Z)\to L^q([0,b],Z)$ defined by
    \begin{equation}
        (\mathcal{B}v)(t)=B(t)v(t),~\text{for almost all}~t\in (0,b),
    \end{equation}
We now assume the following assumptions on the operators $\mathcal{A}$ and $\mathcal{B}$ and the multimap $F$.
\begin{itemize}
    \item[(A1)] the operator $\mathcal{A}$ is well defined, linear and bounded.
    \item[(A2)] for $u_0\in Y$ and $g\in L^q([0,b],Z)$ there exists  unique solution $u\in X(b)$ to the problem
    \begin{equation}\label{Linear problem 1}
        \begin{cases}
            u^{\prime}+\mathcal{A}u=g, ~\text{in}~L^q([0,b],Z)\\
            u(0)=u_0,~\text{in}~Z.
        \end{cases}
    \end{equation}
    \item[(A3)] there exists $c_A>0$ such that all solutions $u\in X(b)$ to problem \eqref{Linear problem 1} fulfil the a priori estimate 
    \begin{equation}
        \norm{u}_{X(b)}\le c_A(\norm{u_0}_Y+\norm{g}_{L^q([0,b],Z)}).
    \end{equation}
    \item[(B1)] The operator $\mathcal{B}$ is well defined, and sequentially continuous.
    \item[(B2)] for all $u_0\in Y$, there exists $c_{B,u_0}>0$ and a function $\sigma_{u_0}: [0,\infty)\times (0,b]\to [0,\infty)$ that is monotonically increasing in both arguments such that 
    \begin{equation}
        \norm{\mathcal{B}u}_{L^q([0,b],Z)}\le \sigma_{u_0}(\norm{u}_{X(b)},b)
    \end{equation}
    for almost all $u\in X(b)$ with $u(0)=u_0$ in $Z$ and such that for every $R>0,$ there exists $b_R\in (0,b]$ with $\sigma_{u_0}(R,b_R)\le c_{B,u_0}.$
    \item[(F1)] $F: [0,a]\times Z\multimap Z$ is product measurable.
    \item[(F2)] $F(t,\cdot): Z\multimap Z$ is lower semicontinuous for a.a. $t\in [0,a]$.
    \item[(F3)] there exists $\alpha\in L^q(0,a)$ with $\alpha\ge 0$ for a.a. $ t\in (0,a)$ and a monotonically increasing function $\eta_F:[0,\infty)\to [0,\infty)$ such that
    \begin{equation}
        \norm{F(t,u)}\le \alpha(t)(1+\eta_F(\norm{u}_Z)),
    \end{equation}
    for a.a. $t\in (0,a)$ and all $u\in Z$.
\end{itemize}
We now establish the second main result of this paper.
\begin{theorem}\label{Main result 2}
Let the assumptions (A1)-(A3), (B1)-(B2) and (F1)-(F3) holds. Then, if $u_0\in Y$, there exists a local solution to problem \eqref{Inclusion Problem 1}, that is there exists $b_0\in (0,a], u\in X(b_0)$ and $f\in S_F(u)$ such that
\begin{equation}\label{Main Inclusion Problem}
    \begin{cases}
        u^{\prime}+\mathcal{A}u+\mathcal{B}u=f, ~\text{in}~L^q(0,b_0;Z)\\
        u(0)=u_0, ~\text{in}~Z.
    \end{cases}
\end{equation}
\end{theorem}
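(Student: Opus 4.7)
The plan is to mirror the structure of the proof of Theorem \ref{Main Theorem} in the abstract setting, reducing the inclusion to a single-valued fixed-point equation via a continuous decomposable selection and then applying the Schauder Fixed Point Theorem. First, I would verify that the selection multimap $S_F$ from \eqref{selection multimap} is well defined, has closed decomposable values, and is lower semicontinuous from $L^q([0,b],Z)$ (with the domain $X(b)$) into $L^q([0,b],Z)$. The closed and decomposable structure of its values is straightforward from the definition; lower semicontinuity follows from (F1)--(F2) combined with the standard argument of \cite{MR1972917}, while the growth control \eqref{estimate 4}-type bound comes from (F3) together with the embedding $Y\hookrightarrow Z$ and the absolute-continuity estimate $\|\tilde u\|_{L^{\infty}([0,b],Z)}\le \|u_0\|_Z+b^{1/q'}\|\tilde u'\|_{L^q([0,b],Z)}$ obtained from $\tilde u(t)=u_0+\int_0^t\tilde u'(s)\,ds$ in $Z$. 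Theorem \ref{decomposableselectiontheorem} then delivers a continuous selection $\Lambda:X(b)\to L^q([0,b],Z)$ of $S_F$.

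Next, I would define the candidate fixed-point operator $\Upsilon:Y(b,R)\to X(b)$, where
\begin{equation*}
Y(b,R)=\{u\in X(b):u(0)=u_0,\ \|u\|_{X(b)}\le R\},
\end{equation*}
and $\Upsilon(\tilde u)=u$ is the unique solution (guaranteed by (A2)) of the linear Cauchy problem
\begin{equation*}
u'+\mathcal{A}u=-\mathcal{B}\tilde u+\Lambda(\tilde u),\qquad u(0)=u_0.
\end{equation*}
A fixed point of $\Upsilon$ yields a solution of \eqref{Main Inclusion Problem} in the sense of the definition preceding the theorem. To see that $Y(b,R)$ is non-empty (for $R$ large enough), I would take the solution from (A2) with right-hand side $g=0$ and use (A3). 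To show $\Upsilon(Y(b,R))\subset Y(b,R)$ for suitable $b_0,R_0$, the a priori estimate (A3) gives
\begin{equation*}
\|\Upsilon(\tilde u)\|_{X(b)}\le c_A\bigl(\|u_0\|_Y+\|\mathcal{B}\tilde u\|_{L^q([0,b],Z)}+\|\Lambda(\tilde u)\|_{L^q([0,b],Z)}\bigr),
\end{equation*}
and I would control the two right-hand terms using (B2) (which provides $\sigma_{u_0}(R,b_R)\le c_{B,u_0}$ for small $b$) and (F3) combined with the $L^\infty([0,b],Z)$-bound above. Choosing first $R_0>2c_A(\|u_0\|_Y+\|\alpha\|_{L^q(0,b)}(1+\eta_F(\|u_0\|_Z+1)))$ and then $b_0$ small enough so that the $\mathcal{B}$-contribution and the $b^{1/q'}$-terms in the $F$-bound stay under control produces the required invariance.

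For continuity of $\Upsilon$ on $Y(b_0,R_0)$ as a subset of $L^q([0,b_0],Z)$, I would take $\tilde u_n\to\tilde u$ in $L^q([0,b_0],Z)$, write $u_n=\Upsilon(\tilde u_n)$, note that $\{u_n\}$ is bounded in $X(b_0)$, and extract a subsequence with $u_n\rightharpoonup u$ in $L^q([0,b_0],X)$ and $u_n'\rightharpoonup u'$ in $L^q([0,b_0],Z)$. Passage to the limit in the linear equation uses linearity and boundedness of $\mathcal{A}$ (A1), sequential continuity of $\mathcal{B}$ (B1), and continuity of $\Lambda$; uniqueness from (A2) then identifies the limit and gives convergence of the whole sequence. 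Compactness of $Y(b_0,R_0)$ in $L^q([0,b_0],Z)$ follows from the Aubin--Lions lemma applied to the triple $X\hookrightarrow\hookrightarrow Z\hookrightarrow Z$, and Schauder's theorem then provides the desired fixed point.

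The main obstacle I anticipate is the self-map step, because the $\mathcal{B}$-term is only controlled through the abstract function $\sigma_{u_0}(R,b)$, so the two parameters $R$ and $b$ must be chosen consistently with the nonlinear growth $\eta_F$ in (F3); one has to fix $R_0$ first using (A3) applied to the linear part with $g=0$, and only then shrink $b_0$ exploiting (B2) and the fact that $b^{1/q'}\to 0$ as $b\to 0^{+}$ to absorb both the $\mathcal{B}\tilde u$ contribution and the implicit dependence of $\eta_F$ on $b^{1/q'}\|\tilde u'\|_{L^q([0,b],Z)}$. A secondary technical point is that $\Lambda$ is continuous from $X(b_0)$ (not from $L^q([0,b_0],Z)$) into $L^q([0,b_0],Z)$, so during the continuity argument one has to exploit the strong convergence of $\tilde u_n$ in $L^q([0,b_0],Z)$ together with their uniform $X(b_0)$-bounds to upgrade convergence of $\Lambda(\tilde u_n)$; this is precisely where lower semicontinuity of $F(t,\cdot):Z\multimap Z$ from (F2) is needed in the construction of the selection.
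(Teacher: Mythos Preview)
Your proposal is correct and follows essentially the same approach as the paper: construct a continuous selection $\Lambda$ of $S_F$ via the decomposable-valued selection theorem, define the solution operator $\Upsilon$ on $Y(b,R)$ by solving the linear problem with right-hand side $-\mathcal{B}\tilde u+\Lambda(\tilde u)$, obtain the self-mapping property from (A3), (B2), (F3) and the $L^\infty([0,b],Z)$ bound, and conclude by Schauder using Aubin--Lions compactness and the weak-limit continuity argument for $\Upsilon$. The parameter-selection strategy (fix $R_0$ first, then shrink $b_0$) and the technical caveat you flag about the domain of continuity of $\Lambda$ are also in line with how the paper proceeds.
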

\begin{proof}
 We consider the set
 \begin{equation}
     Y(b,R)=\{u\in X(b): u(0)=u_0, ~\text{in}~Z, \norm{u}_{X(b)}\le R\},
 \end{equation}
 for arbitrary $R>0$. We have continuous embeddings 
 \begin{equation}\label{continuous embedding}
     X(b)\hookrightarrow W^{1,1}([0,b],Z)\hookrightarrow C([0,b];Z),
 \end{equation}
 so functions in $X(b)$ are almost everywhere equal to functions on $[0,b]$ with values in $Z$ that are continuous. Therefore, $Y(b, R)$ is well-defined. First, we show that $Y(b, R)$ is non-empty for large enough $R$. Due to assumptions (A2) and (A3), there exists a solution $\tilde{u}\in X(b)$ to the linear problem \eqref{Linear problem 1} with right-hand side $g=0$, and this solution fulfils the bound
 \begin{equation}
     \norm{\tilde{u}}_{X(b)}\le c_A\norm{u_0}_Y,
 \end{equation}
 so if we choose $R\ge c_A\norm{u_0}_Y$, we have $\tilde{u}\in Y(b,R)$, that is, $Y(b,R)$ is nonempty. 
 
 We now consider the selection multimap $S_F: X(b)\subset L^q([0,b],Z)\multimap L^q([0,b],Z)$ as in \eqref{selection multimap}.
 By virtue of assumption (F1), the multimap $F$ is product measurable. Hence, the multimap $t\multimap F(t,u(t))$ is closed valued and measurable for every $u\in X(b)$. In accordance with the Ryll-Kurtowski Selection Theorem, we can find a measurable mapping $f:[0,b]\to Z$ such that $f(t)\in F(t, u(t))$ for a.a. $t\in [0,b]$. Hence $f\in S_F(u)$ for every $u\in X(b)$. Consequently, the multimap $S_F$ is well defined. As in the proof of Theorem \ref{Main Theorem} we obtain a continuous selection, say $\Lambda: X(b)\to L^q([0,b],Z)$ from the multimap $S_F$, that means $\Lambda(u)\in S_F(u)$ for every $u\in X(b)$.

 Now, we define $\Upsilon: Y(b,R)\to L^q([0,b],Z)$ with $u=\Upsilon(\tilde{u})$ if and only if $u\in X(b)$ is a solution to the problem
 \begin{equation}\label{P2}
     \begin{cases}
         u^{\prime}(t)+(\mathcal{A}u)(t)=-(\mathcal{B}\tilde{u})(t)+(\Lambda \tilde{u})(t),~\text{in}~Z\\
         u(0)=u_0,~\text{in}~Z.
     \end{cases}
 \end{equation}
 By virtue of assumption (A2), the problem \eqref{P2} has unique solution $u\in X(b)$ satisfying
   \begin{equation}
        \norm{u}_{X(b)}\le c_A(\norm{u_0}_Y+\norm{g}_{L^q([0,b],Z)}),
    \end{equation}
    where $g=-\mathcal{B}\tilde{u}+\Lambda \tilde{u}\in L^q([0,b],Z)$. Therefore, the map $\Upsilon$ is well defined, and a fixed point of $\Upsilon$ is a solution to problem \eqref{Main Inclusion Problem}. Consequently, our aim is to prove the existence of fixed points of the map $\Upsilon$. We employ the Schauder Fixed Point Theorem to fulfill this claim. In order to do so, we need to verify that $\Upsilon$ maps $Y(b, R)$ into itself, $Y(b, R)\subset L^q([0,b], Z)$ is compact, and $\Upsilon$ is continuous. 
    
    Let $\tilde{u}\in Y(b,R)$ and $u=\Upsilon(\tilde{u})$. Due to assumption (A3), we have the estimate
    \begin{equation}\label{expression 1}
        \norm{u}_{X(b)}\le c_A\left(\norm{u_0}_Y+\norm{\mathcal{B}\tilde{u}}_{L^q([0,b],Z)}+\left(\int_{0}^{b}\norm{\Lambda\tilde{u}(t)}^q_Zdt\right)^{\frac{1}{q}}\right).
    \end{equation}
    We now estimate $\norm{\tilde{u}(t)}_Z$. From the continuous embedding $X(b)\hookrightarrow W^{1,1}([0,b],Z)$, $\tilde{u}$ is almost everywhere equal to an absolutely continuous function on $[0,b]$ with values in $Z$. Therefore, using the fundamental theorem of calculus for absolutely continuous functions, we obtain
\begin{align*}
    \norm{\tilde{u}(t)}_Z\le& \norm{u_0}_Z+\int_{0}^{t}\norm{\tilde{u}^{\prime}(\tau)}_Zd\tau ~\text{for all}~t\in [0,b].
\end{align*}
In view of Holder's inequality, we obtain
\begin{align*}
    \norm{\tilde{u}(t)}_Z\le \norm{u_0}_Z+b^{\frac{q-1}{q}}\norm{\tilde{u}^{\prime}}_{L^q([0,b],Z)}
    \le \norm{u_0}_Z+b^{\frac{q-1}{q}}R, ~\text{for all}~t\in [0,b].
\end{align*}
By virtue of assumption (B2), using the monotonicity of the map $\sigma_{u_0}$ we obtain
\begin{equation}\label{expression 2}
    \norm{\mathcal{B}\tilde{u}}_{L^q([0,b],Z)}\le \sigma_{u_0}(\norm{\tilde{u}}_{X(b)},b)\le \sigma_{u_0}(R, b).
\end{equation}
Now, the growth condition on $F$ and the monotonicity of $\eta_F$, (see assumption (F3)) imply
\begin{equation}
   \norm{F(t,\tilde{u}(t)}_Z\le \alpha(t)(1+\eta_F(\norm{\tilde{u}(t)}_Z)\le \alpha(t)(1+\eta_F(\norm{u_0}_Z+b^{\frac{q-1}{q}}R)) 
\end{equation}
for almost all $t\in (0,b)$ and therefore
\begin{align}\label{expression 3}
    \left(\int_{0}^{b}\norm{(\Lambda \tilde{u})(t)}_Z^qdt\right)^{\frac{1}{q}} \le &\left(\int_{0}^{b}\alpha(t)^q\left(1+\eta_F(\norm{u_0}_Z+b^{\frac{q-1}{q}}R)\right)^qdt\right)^{\frac{1}{q}}\\
    =& \norm{\alpha}_{L^q(0,b)}\left(1+\eta_F(\norm{u_0}_Z+b^{\frac{q-1}{q}}R)\right).
\end{align}
Combining expressions \eqref{expression 2} and \eqref{expression 3} we obtain from \eqref{expression 1},
\begin{equation}
    \norm{u}_{X(b)}\le c_A\left(\norm{u_0}_Y+\sigma_{u_0}(R,b)+\norm{\alpha}_{L^q(0,b)}(1+\eta_F(\norm{u_0}_Z+b^{\frac{q-1}{q}}R\right).
\end{equation}
Now, by choosing $R_0=c_A(\norm{u_0}_Y+\sigma_{u_0}(R,b)+\norm{\alpha}_{L^q(0,b)}(1+\eta_F(1+\norm{u_0}_Z)))$, and following \cite{{MR1893419}}, we can prove that the map $\Upsilon$ maps $Y(b_0, R_0)$ into itself, for some $b_0>0$.

The compactness of the set $Y(b_0,R_0)$ follows from the compact embedding of $X(b_0)$ into $L^q([0,b],Z)$. Therefore, it remains to prove that the map $\Upsilon$ is continuous. For this we consider $\tilde{u}_n\in Y(b_0, R_0)$ such that $\tilde{u_n}\to \tilde{u}$ in $L^q([0,b_0],Z)$. We prove that $\Upsilon(\tilde{u}_n)\to \Upsilon(\tilde{u})$ in $L^q([0,b_0],Z)$. Let $u_n=\Upsilon(\tilde{u}_n)$ and $u=\Upsilon(\tilde{u})$. By definition, $u_n$ is the solution to the problem
 \begin{equation}\label{P3}
     \begin{cases}
         u_n^{\prime}(t)+(\mathcal{A}u_n)(t)=-(\mathcal{B}\tilde{u}_n)(t)+(\Lambda \tilde{u}_n)(t),~\text{in}~Z\\
         u_n(0)=u_0,~\text{in}~Z.
     \end{cases}
 \end{equation}
 As $u_n\in Y(b_0,R_0)$, we have $\{u_n\}_{n\in \mathbb{N}}$ is bounded in $X(b_0).$ As the embedding from $X(b_0)$ into $L^q([0,b_0],Z)$ is compact, we obtain $u_n\to v$ in $L^q([0,b_0],Z)$ up to a subsequence. Also, the sequence $\{u_n\}_{n\in \mathbb{N}}$ is bounded in $X(b_0)$. By reflexivity of the spaces $L^q([0,b_0],X)$ and $L^q([0,b_0],Z)$ we conclude that $u_n\rightharpoonup u$ in $L^q([0,b_0],X)$ and $u_n^{\prime}\rightharpoonup u^{\prime}$ in $L^q([0,b_0],Z)$. From equation \eqref{P3} we obtain
 \begin{equation}
      u_n^{\prime}=-(\mathcal{A}u_n)-(\mathcal{B}\tilde{u}_n)+(\Lambda \tilde{u}_n)\rightharpoonup -(\mathcal{A}u)-(\mathcal{B}\tilde{u})+(\Lambda \tilde{u}), ~\text{in}~L^q([0,b],Z). 
 \end{equation}
 Therefore, by the uniqueness of limit we obtain $v=u$ and
 \begin{equation}
     u^{\prime}= -(\mathcal{A}u)-(\mathcal{B}\tilde{u})+(\Lambda \tilde{u}).
 \end{equation} 
 This shows that $u=\Upsilon(\tilde{u})$ and $\Upsilon(\tilde{u}_n)=u_n\to u=\Upsilon(\tilde{u})$. Thus, we have proved that the map $\Upsilon$ is continuous. By Schauder's fixed point theorem, we conclude that the map $\Upsilon$ has a fixed point, say $u$ and $u$ is a solution to the problem
 \begin{equation}
     \begin{cases}
         u^{\prime}(t)+(\mathcal{A}u)(t)=-(\mathcal{B}u)(t)+(\Lambda u)(t),~\text{in}~Z\\
         u(0)=u_0,~\text{in}~Z.
     \end{cases}
 \end{equation}
 Now, by the definition of the map $\Lambda$, we obtain $(\Lambda u)(t)\in F(t,u(t))$; hence $u\in X(b_0)$ is a solution to the problem \eqref{Main Inclusion Problem}. 
\end{proof}

\begin{remark}
The problem \eqref{Navier Stokes} can be converted into the inclusion problem \eqref{Inclusion Problem 1} by defining the operators $\mathcal{A}$ and $\mathcal{B}$ suitably. In fact, we define $\mathcal{A}: L^q([0,b], D(A_p))\to L^q([0,b], H_p)$ as follows:
\begin{equation}
    (\mathcal{A}u)(t)=\frac{1}{\operatorname{Re}}A(t)u(t),~\text{for a.a.}~t\in [0,b].
\end{equation}
Further, define $\mathcal{B}: X(b)\to L^q([0,b], H_p)$ as $(\mathcal{B}u)(t)=P_p(\tilde{B}u)(t)$ for a.a. $t\in [0,b]$, where $\tilde{B}: X(b)\to L^q([0,b], L^p(D)^3)$ is given by
\begin{equation}
    (\tilde{B}(u))(t)=(u(t)\cdot \nabla)u(t),~\text{for a.a.}~t\in [0,b].
\end{equation}
Then, following \cite{eikmeier2023existence}, we can see that the operators $\mathcal{A}$ and $\mathcal{B}$ satisfy all the assumptions required to prove Theorem \ref{Main result 2}. 
\end{remark}

				\bibliographystyle{elsarticle-num}
				\bibliography{sn-bibliography}
				
			\end{document}